\newtheorem{theorem}{Theorem}[section]
\newtheorem{proposition}[theorem]{Proposition}
\newtheorem{corollary}[theorem]{Corollary}
\theoremstyle{definition}
\newtheorem{remark}[theorem]{Remark}
\newtheorem{question}[theorem]{Question}
\numberwithin{equation}{section}
\newcommand{\A}{\mathrm{Re\,}}
\newcommand{\veps}{\varepsilon}
\newcommand{\CC}{\mathbb C}
\newcommand{\RR}{\mathbb R}
\newcommand{\DD}{\mathbb D}
\newcommand{\QQ}{\mathbb Q}
\newcommand{\TT}{\mathbb T}
\newcommand{\Gu}{{\mathcal G}_{\geq 1}}
\newcommand{\Nphi}{\mathcal N_{\varphi}}
\begin{document}
\setcounter{page}{1}

\title[Counting functions]
{Counting functions for Dirichlet series and compactness of composition operators}
\date{\today}

\author[F. Bayart]{Frédéric Bayart}

\address{Laboratoire de Math\'ematiques Blaise Pascal UMR 6620 CNRS, Universit\'e Clermont Auvergne, Campus universitaire des C\'ezeaux, 3 place Vasarely, 63178 Aubi\`ere Cedex, France.}
\email{frederic.bayart@uca.fr}


\subjclass[2010]{Primary 47B33, Secondary 30B50, 46E15.}

\keywords{Composition operator, Dirichlet series, compactness}

\begin{abstract}
We give a sufficient condition for a composition operator with positive characteristic to be compact on the Hardy space of Dirichlet series. 
\end{abstract}
\maketitle

\section{Introduction}

\subsection{Description of the results}

Let $\mathcal H^2$ be the Hilbert space of Dirichlet series $f(s)=\sum_{n\geq 1}a_n n^{-s}$ with square summable coefficients
endowed with 
$\|f\|^2=\sum_{n\geq 1}|a_n|^2.$
By the Cauchy-Schwarz inequality, Dirichlet series in $\mathcal H^2$ generate holomorphic functions in $\CC_{1/2}$,
where $\CC_\theta=\{s\in\CC:\ \Re e(s)>\theta\}$. Let $\varphi:\CC_{1/2}\to\CC_{1/2}$ be analytic. The composition operator with symbol $\varphi$ is defined on $\mathcal H^2$
by $C_\varphi(f)=f\circ\varphi$. In \cite{GorHe}, Gordon and Hedenmalm determined which symbols $\varphi$ generate a bounded composition operator on $\mathcal H^2$:
this happens if and only if $\varphi$ belongs to the Gordon-Hedenmalm class $\mathcal G$ of the analytic functions $\varphi:\CC_{1/2}\to\CC_{1/2}$ which may be written
$\varphi(s)=c_0+\psi(s)$ where $c_0$ is a non-negative integer, $\psi$ is a Dirichlet series that converges uniformly in $\CC_{\veps}$ for every $\veps>0$ and satisfies
the following properties:
\begin{enumerate}[(a)]
 \item if $c_0=0$, then $\psi_0(\CC_0)\subset\CC_{1/2}$;
 \item if $c_0\geq 1$, then either $\psi(\CC_0)\subset\CC_0$ or $\psi_0\equiv 0$.
\end{enumerate}
The non-negative integer $c_0$ is called the characteristic of $\varphi$ and we will use the notation $\mathcal G_0$ and $\mathcal G_{\geq 1}$, respectively,
for the subclasses (a) and (b).

Once you know your operator is continuous the next step is to study whether it is compact. In our context we try to characterize compactness of $C_\varphi$
from properties of its symbol $\varphi$. This has been investigated in many papers (like \cite{Bail15}, \cite{BAYILLI}, \cite{BB16},  \cite{BPortho}, \cite{BP21},  \cite{FQV}, \cite{QS14}).
Following the seminal paper of Shapiro \cite{Sha87} for composition operators on $H^2(\DD)$, a natural way for doing so is to characterize compactness of $C_\varphi$ by mean of some counting function
related to $\varphi$. This was recently achieved in \cite{BP21} for the subclass $\mathcal G_{0}$ of composition operators with zero-characteristic.

In the present paper, we mostly concentrate on the subclass $\mathcal G_{\geq 1}$. In \cite{BAYILLI}, the Nevanlinna counting function of $\varphi=c_0s+\psi\in\mathcal G_{\geq 1}$ was defined on $\CC_0$
by
$$\mathcal N_\varphi(w)=\sum_{\varphi(s)=w}\Re e(s),\ w\in\CC_0.$$
In that paper, it was shown that provided $|\Im m (\psi)|$ is bounded, the condition $\mathcal N_\varphi(w)=o(\Re e(w))$ as $\Re e(w)$ tends to $0$ implies the compactness of $C_\varphi$. Conversely,
in \cite{Bail15}, Bailleul established that if $\psi$ is supported on a finite set of prime numbers and is finitely valent, the compactness of $C_\varphi$ implies the above condition
on $\mathcal N_\varphi$. Moreover these two properties are equivalent if $\psi$ is supported on a single prime number (see \cite{BPortho}). We recall that if $\mathcal Q$ is a subset of the set
of prime numbers, a Dirichlet series $\sum_n a_n n^{-s}$ is supported on $\mathcal Q$ provided $a_n=0$ as soon as there exists a prime number not in $\mathcal Q$ such that $p|n$.

Our aim here is to show that the result of \cite{BAYILLI} is still true without any assumption 
on $|\Im m (\psi)|$.

\begin{theorem}\label{thm:main}
 Let $\varphi\in\Gu$ and let us assume that $\mathcal N_\varphi(w)=o(\Re e(w))$ as $\Re e(w)$ tends to $0$.
 Then $C_\varphi$ is compact on $\mathcal H^2$.
\end{theorem}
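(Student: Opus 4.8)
\medskip
\noindent\textbf{Proof strategy.}
The plan is to use the sequential criterion for compactness: it suffices to show that $\|C_\varphi f_j\|_{\mathcal H^2}\to0$ whenever $(f_j)$ is bounded in $\mathcal H^2$ and converges to $0$ uniformly on compact subsets of $\CC_{1/2}$. Indeed these are exactly the bounded weakly null sequences (the reproducing kernels of $\mathcal H^2$ being locally bounded and depending continuously on the point), and a bounded operator on $\mathcal H^2$ is compact iff it maps one, hence every, such sequence to a norm null one. Writing $f_j=\widehat{f_j}(1)+g_j$ with $\widehat{g_j}(1)=0$, the constant parts are harmless ($C_\varphi$ fixes constants and $\widehat{f_j}(1)\to0$), so we may assume $\widehat{f_j}(1)=0$. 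It is moreover convenient to reduce to characteristic $c_0=1$: setting $\rho(s):=s+\psi(s)/c_0$ one checks that $\rho\in\Gu$, that $\mathcal N_\rho(v)=\mathcal N_\varphi(c_0v)$ — so that the hypothesis passes to $\rho$ — and that $C_\varphi=C_\rho\circ C_{c_0 s}$, where $C_{c_0 s}$ is an isometry of $\mathcal H^2$; hence it is enough to prove $C_\rho$ compact. From now on $\varphi(s)=s+\psi(s)$, so that $\Re e\,\varphi(s)\ge\Re e\,s$ on $\CC_0$.

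The engine of the proof is a Littlewood--Paley / Stanton-type bound of $\|C_\varphi g\|_{\mathcal H^2}^2$ by the Nevanlinna counting function. Morally one starts from a two-sided identity $\|h\|_{\mathcal H^2}^2\asymp|\widehat h(1)|^2+\int|h'|^2\,d\omega$ — made precise through the Bohr lift on $\ttinf$ — and, pulling $h=C_\varphi g$ back along the extension of $\varphi$ to $\CC_0$ and changing variables $w=\varphi(\zeta)$, the local weight $\Re e\,\zeta$ is summed over each fibre into $\sum_{\varphi(\zeta)=w}\Re e\,\zeta=\mathcal N_\varphi(w)$; this should yield, for $g$ with $\widehat g(1)=0$,
\[
\|C_\varphi g\|_{\mathcal H^2}^2\;\lesssim\;\int|g'(w)|^2\,\mathcal N_\varphi(w)\,d\nu(w),
\]
where $d\nu$ combines planar measure on $\CC_0$ with the translation-invariant vertical mean, and where the analogous two-sided estimate with $\mathcal N_\varphi$ replaced by $\Re e(\cdot)$ — the case $\varphi=s$ — reads $\int|g'(w)|^2\,\Re e(w)\,d\nu(w)\asymp\|g\|_{\mathcal H^2}^2$. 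The decisive new difficulty, and what I expect to be the main obstacle, is to establish such an estimate \emph{without} assuming $|\Im m\,\psi|$ bounded, as in \cite{BAYILLI}: if $\psi$ has unbounded imaginary part then $\varphi$ smears vertical lines across the whole plane, so the change of variables cannot be carried out locally. I would attack this by slicing the relevant region into unit vertical strips, running the estimate on each strip with constants that do not depend on the strip — here the vertical-translation invariance of $\mathcal H^2$ and of $\mathcal N_\varphi$, and the uniformity in $\Im m\,w$ of the hypothesis $\mathcal N_\varphi(w)=o(\Re e\,w)$, are essential — and then summing, probably treating Dirichlet polynomials $g$ (or a suitable approximation of $\varphi$) first in order to justify the manipulations.

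Granting the displayed estimate, the conclusion is routine. Fix $\veps>0$; the hypothesis provides $\delta>0$ such that $\mathcal N_\varphi(w)\le\veps\,\Re e\,w$ for $\Re e\,w<\delta$, and we split the integral at $\Re e\,w=\delta$. On $\{\Re e\,w<\delta\}$ the integrand is $\le\veps\,|g_j'(w)|^2\Re e(w)$, so by the model identity that piece is $\lesssim\veps\,\|g_j\|_{\mathcal H^2}^2\lesssim\veps$. On $\{\Re e\,w\ge\delta\}$ one invokes the standard bound $\mathcal N_\varphi(w)=O(\Re e\,w)$ and the model identity to control the tail $\{\Re e\,w\ge R\}$ uniformly in $j$, while on each slab $\{\delta\le\Re e\,w\le R\}$ the counting function is bounded and $g_j'\to0$ locally uniformly (equivalently, coefficientwise), so this piece tends to $0$ as $j\to\infty$. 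Therefore $\limsup_j\|C_\varphi g_j\|_{\mathcal H^2}^2\lesssim\veps$, and $\veps$ being arbitrary we conclude. One subtlety to bear in mind is that $g_j$ lives on $\CC_{1/2}$ while $\mathcal N_\varphi$ is defined on $\CC_0$: this is reconciled by the fact that $C_\varphi g$ extends analytically beyond the line $\Re e=\tfrac12$ and that, by the hypothesis itself, the part of the integral near $\Re e\,w=0$ is negligible, so only the part of $\CC_0$ on which $g$ is genuinely defined ends up mattering.
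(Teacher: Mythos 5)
Your skeleton is the right one and matches the paper's: test on a weakly null sequence, run a Littlewood--Paley identity, perform the non-univalent change of variables $w=\varphi_\chi(s)$ to bring in a counting function, and split the resulting integral at $\Re e(w)=\theta$, using the hypothesis on $\{\Re e(w)<\theta\}$ and a coefficient/tail argument on the rest. You also correctly locate the obstacle: when $|\Im m(\psi)|$ is unbounded, the image of the strip $\{\sigma>0,\ |t|<1\}$ under $\varphi_\chi$ is not confined to a bounded vertical range, so the $w$-integral runs over all of $\CC_0$ and cannot be compared to $\|g\|^2$ via a Littlewood--Paley formula with a compactly supported measure in $v=\Im m(w)$. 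But your proposed resolution --- slice into unit vertical strips, run the estimate on each with constants independent of the strip, and sum --- does not close this gap: by vertical translation invariance each strip contributes a quantity comparable to $\|g\|^2$, so summing over infinitely many strips diverges. Uniformity of the hypothesis $\mathcal N_\varphi(w)=o(\Re e(w))$ in $\Im m(w)$ gives you the \emph{same} bound on every strip, not a \emph{summable} sequence of bounds, and nothing in your outline produces decay in the strip index.

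The missing idea --- and the actual content of the paper --- is to replace $\mathcal N_\varphi$ by the restricted counting function $N_\varphi(w)=\sum_{\varphi(s)=w,\ |\Im m(s)|<1}\Re e(s)$, which is what the change of variables from $\{|t|<1\}$ genuinely yields (as an equality, not an inequality), and to prove that it decays quadratically in $\Im m(w)$: there is $C>0$ with $N_{\varphi_\chi}(w)\leq C\,\Re e(w)/(1+(\Im m(w))^2)$ for all $\chi\in\TT^\infty$ and all $w$ with $\Re e(w)<c_0$ (Proposition \ref{prop:uniformnphi}). This is obtained by mapping the disc conformally onto a half-strip, applying Littlewood's inequality to $z\mapsto\frac{\varphi_\chi(\Theta(z))-w}{\varphi_\chi(\Theta(z))+\bar w}$, and estimating $\log\big|\frac{\varphi_\chi(2)+\bar w}{\varphi_\chi(2)-w}\big|$; interpolating this with (NC2) gives $N_{\varphi_\chi}(w)\leq\veps\,\Re e(w)/(1+(\Im m(w))^2)^{3/4}$ for $\Re e(w)$ small. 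It is precisely this $v$-decay that makes the integral over all of $\CC_0$ comparable to the Littlewood--Paley norm taken with the \emph{finite} measure $dv/(1+v^2)^{3/4}$ --- i.e.\ it is the summability over your strips. A secondary, fixable point: in your final step the slab $\{\delta\leq\Re e(w)\leq R\}$ is not compact (it is vertically unbounded, and there is also the integral over $\chi\in\TT^\infty$), so ``$g_j'\to 0$ locally uniformly'' does not finish it; one should instead integrate out $\chi$ by Parseval to reduce to $\sum_k|a_{j,k}|^2(\log k)^2\int_\theta^\infty k^{-2u}\,du$ and argue on coefficients --- which again uses the $1/(1+v^2)$ decay to dispose of the $v$-integral. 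Your reduction to $c_0=1$ and the treatment of the region $\{\Re e(w)<\theta\}$ are fine.
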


\subsection{Background material}

We briefly review some basic facts on Dirichlet series. Let $\TT^\infty$ be the infinite polycircle endowed with its Haar measure $m$. It can be identified
to the group of characters of $(\QQ_+,+)$ via the prime number factorization: to any $z\in\TT^\infty$
we associate the character $\chi$ defined by
$$\chi(n)=z_1^{\alpha_1}\cdots z_d^{\alpha_d}\textrm{ for }n=\prod_{j=1}^d p_j^{\alpha_j}.$$
For $f(s)=\sum_n a_n n^{-s}$ a Dirichlet series and $\chi$ a character, we denote by $f_\chi$ the Dirichlet series $f_\chi(s)=\sum_n a_n \chi(n)n^{-s}$. 
If $f$ converges uniformly in $\overline{\CC_\theta}$ for some $\theta\in\RR$, then for any $\chi\in\TT^\infty$, there exists a sequence of real numbers $(\tau_n)$
such that $f_\chi$ is the uniform limit in $\overline{\CC_\theta}$ of the vertical translates $(f(\cdot+i\tau_n))$. Conversely all uniform limits in $\overline{\CC_\theta}$
of vertical translates are equal to some $f_\chi$, which justifies that the functions $f_\chi$ are called vertical limit functions.

If we now assume that $f$ belongs to $\mathcal H^2$, then for almost every $\chi\in\TT^\infty$, $f_\chi$ converges in $\CC_0$ and one can compute the norm of $f$
via the following Littlewood-Paley formula:
$$ \mu(\mathbb R)\|f\|^2=\mu(\RR)|f(+\infty)|^2+4\int_{\TT^\infty}\int_0^{+\infty}\int_{\RR} \sigma |f_\chi'(\sigma+it)|^2 d\mu(t)d\sigma dm(\chi),$$
where $\mu$ is any finite positive measure on $\RR$. 

Regarding composition operators, the notion of vertical limits is extended to symbols $\varphi\in\mathcal G_{\geq 1}$ by defining 
$\varphi_\chi=c_0 s+\psi_\chi$. The composition operators $C_{\varphi}$ and $C_{\varphi_\chi}$ are related by the formula
$$(f\circ\varphi)_\chi=f_{\chi^{c_0}}\circ\varphi_\chi.$$

\section{On counting functions for Dirichlet series}

Let $\varphi$ belong to $\Gu$. The classical Nevanlinna counting function associated to $\varphi$ is defined on $\CC_0$ by
$$\Nphi(w)=\sum_{\varphi(s)=w}\Re e(s),\ w\in \CC_0.$$
It was introduced in \cite{BAYILLI} where the two following important properties were proved:
\begin{itemize}
 \item[{\bf (NC1)}] $\Nphi(w)\leq \frac 1{c_0}\Re e(w)$ for all $w\in\CC_0$.
 \item[{\bf (NC2)}] If $\Nphi(w)=o(\Re e(w))$ as $\Re e(w)\to 0$, then for all $\veps>0$, there exists $\theta>0$ such that, for all $w\in\CC_0$
 with $\Re e(w)<\theta$, for all $\chi\in\TT^\infty$, $\mathcal N_{\varphi_\chi}(w)\leq \veps \Re e(w)$ (namely, 
 the $o$ bound is uniform with respect to $\chi\in\TT^\infty$).
\end{itemize}

In this paper, we will be also interested in a restricted version of the counting function, which is defined by
$$N_\varphi(w)=\sum_{\substack{\varphi(s)=w\\ |\Im m(s)|<1}}\Re e(s),\ w\in\CC_0.$$
A similar restricted counting function has been introduced in \cite{BPortho}, when $\varphi$ is supported on a single prime number.
The main interest of working with $N_\varphi$ instead of $\Nphi$ comes from the following enhancement of (NC1), which is inspired by
\cite[Lemma 6.3]{BPortho}.

\begin{proposition}\label{prop:uniformnphi}
 Let $\varphi=c_0s+\psi\in\Gu$. There exists $C>0$ such that, for all $\chi\in\TT^\infty$, for all $w\in\CC_0$ with $\Re e(w)<c_0$, 
 $$N_{\varphi_\chi}(w)\leq C\frac{\Re e(w)}{1+(\Im m(w))^2}.$$
\end{proposition}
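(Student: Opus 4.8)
The plan is to exploit the additive structure $\varphi_\chi=c_0 s+\psi_\chi$ and the fact that $\psi_\chi$ converges uniformly on $\CC_\varepsilon$ for every $\varepsilon>0$, so in particular $\psi_\chi$ is bounded on any half-plane $\CC_{\varepsilon}$ with a bound independent of $\chi$. First I would fix $w\in\CC_0$ with $\Re e(w)<c_0$ and look at the equation $c_0 s+\psi_\chi(s)=w$. Writing $\Re e(s)=\sigma$ and $\Im m(s)=t$ with $|t|<1$, one gets $c_0\sigma=\Re e(w)-\Re e(\psi_\chi(s))$ and $c_0 t=\Im m(w)-\Im m(\psi_\chi(s))$. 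Since solutions contribute $\sigma=\Re e(s)>0$ to the sum, and $\Re e(\psi_\chi(s))>0$ by the defining property (b) of $\mathcal G_{\geq 1}$ (as $\psi_\chi(\CC_0)\subset\CC_0$, or $\psi_\chi\equiv0$ in which case the statement is trivial since then $\varphi_\chi=c_0 s$ is univalent and $N_{\varphi_\chi}(w)=\frac1{c_0^2}\Re e(w)$), we automatically have $c_0\sigma\le\Re e(w)$, i.e. each solution sits in a bounded vertical strip; and from $\Re e(w)<c_0$ we get $\sigma<1$, so in fact all relevant solutions lie in the strip $\{0<\Re e(s)<1,\ |\Im m(s)|<1\}$, a fixed bounded region.

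Next I would quantify how far $\Im m(w)$ can be from $0$ while still having a solution. On $\{0<\sigma<1,\ |t|<1\}$ the function $\psi_\chi$ is uniformly bounded, say $|\psi_\chi|\le M$ with $M$ independent of $\chi$ — this is the key uniformity, coming from the fact that $\psi$ itself converges uniformly on $\overline{\CC_\varepsilon}$ and all vertical limits inherit the same bound (on a slightly larger region one still has a uniform bound $M'$ and then uses a Montel/normal-families argument, or simply that $\|\psi_\chi\|_{H^\infty(\CC_\varepsilon)}=\|\psi\|_{H^\infty(\CC_\varepsilon)}$). Then $|\Im m(w)|=|c_0 t+\Im m(\psi_\chi(s))|\le c_0+M=:K$. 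Hence if $|\Im m(w)|>K$ there are \emph{no} solutions with $|\Im m(s)|<1$, so $N_{\varphi_\chi}(w)=0$ and the inequality holds trivially for any $C\ge K^2$. This disposes of the regime of large $|\Im m(w)|$, and it is precisely where the factor $1+(\Im m(w))^2$ in the denominator comes from.

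It remains to handle $|\Im m(w)|\le K$, where the denominator $1+(\Im m(w))^2$ is comparable to a constant, so the claim reduces to the bound $N_{\varphi_\chi}(w)\le C'\,\Re e(w)$ with $C'$ uniform in $\chi$ and in $w$ ranging over the bounded region $\{0<\Re e(w)<c_0,\ |\Im m(w)|\le K\}$. Here I would invoke (NC1) applied to $\varphi_\chi$, which gives $N_{\varphi_\chi}(w)\le\mathcal N_{\varphi_\chi}(w)\le\frac1{c_0}\Re e(w)$ — note (NC1) as stated is for $\mathcal N_\varphi$, but the same proof applies verbatim to each $\varphi_\chi\in\mathcal G_{\geq 1}$, and since $N_{\varphi_\chi}\le\mathcal N_{\varphi_\chi}$ we are done with $C'=1/c_0$. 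Actually this last observation shows the restriction $|\Im m(w)|\le K$ was only needed to absorb the denominator, and the whole proposition follows by combining $N_{\varphi_\chi}\le\frac1{c_0}\Re e(w)$ with the vanishing of $N_{\varphi_\chi}(w)$ for $|\Im m(w)|>K$: taking $C=\max(1/c_0,\ K^2/c_0)$ works in both regimes.

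The main obstacle I anticipate is making the uniform bound $M$ on $|\psi_\chi|$ genuinely independent of $\chi$ and valid on the relevant region — one must be a little careful because uniform convergence of $\psi$ on $\overline{\CC_\varepsilon}$ gives control there, but we need it up to (and slightly past) the vertical strip $\{0<\Re e(s)<1\}$, and the solutions could a priori cluster near $\Re e(s)=0$ where $\psi_\chi$ is only defined on the open half-plane $\CC_0$. The fix is that for counting purposes only $\Re e(s)>0$ contributes, and one can either work with $\psi_\chi$ on $\CC_\varepsilon$ and let $\varepsilon\to0$, or note that the property $\psi_\chi(\CC_0)\subset\CC_0$ already forces $\Re e(\psi_\chi(s))>0$ which is all that is used for the containment of solutions in the strip; the two-sided bound $|\Im m(\psi_\chi(s))|\le M$ on compact subsets of $\CC_0$ then follows from normality of the family $\{\psi_\chi\}$ (all being vertical limits of the single function $\psi$, hence bounded by $\|\psi\|_{H^\infty(\CC_\varepsilon)}$ on $\CC_\varepsilon$).
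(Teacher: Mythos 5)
There is a genuine gap, and it sits exactly at the point your last paragraph worries about. Your argument for the regime $|\Im m(w)|>K$ rests on a uniform bound $|\psi_\chi|\leq M$ (or at least $|\Im m(\psi_\chi)|\leq M$) on the region $\{0<\Re e(s)<1,\ |\Im m(s)|<1\}$. No such bound exists in general: $\psi$ converges uniformly on $\CC_\veps$ for each $\veps>0$, so $\|\psi_\chi\|_{H^\infty(\CC_\veps)}=\|\psi\|_{H^\infty(\CC_\veps)}$ is finite for each fixed $\veps$, but these norms may blow up as $\veps\to 0^+$, and the strip in question is not a compact subset of $\CC_0$ — it touches the line $\Re e(s)=0$. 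Worse, the preimages you need to control are forced into that bad zone: since $\Re e(\psi_\chi)\geq 0$, any solution of $\varphi_\chi(s)=w$ has $\Re e(s)\leq \Re e(w)/c_0$, so as $\Re e(w)\to 0$ all preimages accumulate at the boundary, precisely where normal-families or vertical-limit arguments give no uniform control. A concrete counterexample to your key claim ``$N_{\varphi_\chi}(w)=0$ for $|\Im m(w)|>K$'' is $\varphi(s)=s+(1-2^{-s})^{-1}\in\Gu$: near $s=0$ one has $\varphi(s)\approx (s\log 2)^{-1}$, so for every large $v$ the point $w=\delta+iv$ has a preimage $s\approx \big((\delta+iv)\log 2\big)^{-1}$ with $|\Im m(s)|<1$ and $\Re e(s)\asymp \Re e(w)/|w|^2$. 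Thus the decay in $|\Im m(w)|$ does not come from the absence of preimages but from the smallness of $\Re e(s)$ at those preimages, and your argument provides no mechanism to quantify that. This is not a repairable detail within your scheme: the whole point of the proposition (and of the paper, which explicitly removes the hypothesis that $|\Im m(\psi)|$ is bounded from the earlier result of Bayart) is to handle symbols for which your bound $M$ fails.

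The part of your argument that does work is the easy regime: $\varphi_\chi\in\Gu$ with the same characteristic, so (NC1) gives $N_{\varphi_\chi}(w)\leq\mathcal N_{\varphi_\chi}(w)\leq \Re e(w)/c_0$, which yields the stated inequality whenever $|\Im m(w)|$ is bounded. What is missing is the idea the paper uses for the hard regime: restrict to the half-strip $\{\sigma>0,\ |t|<2\}$, transport it to the unit disc by a conformal map $\Theta$, apply the classical Littlewood inequality to the self-map $z\mapsto\big(\varphi_\chi(\Theta(z))-w\big)/\big(\varphi_\chi(\Theta(z))+\bar w\big)$ of $\DD$, and compare $\Re e(s)$ with $\log(1/|\Theta^{-1}(s)|)$. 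This produces the bound $N_{\varphi_\chi}(w)\lesssim \Re e(\varphi_\chi(2))\Re e(w)/|w-\varphi_\chi(2)|^2$, and the only uniformity in $\chi$ that is then needed is a bound on $\psi_\chi$ at the single interior point $s=2$ — which, unlike a bound on the whole strip, is genuinely available.
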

\begin{proof}
 Let $\Theta$ be the conformal map from $\mathbb D$ onto the half-strip
 $$S=\{s=\sigma+it:\ \sigma>0,\ |t|<2\}$$
 normalized by $\Theta(0)=2$ and $\Theta'(0)>0$. By standard regularity results on conformal maps there exists $C_1>0$ 
 such that, for all $s$ with $0<\Re e(s)<1$ and $|\Im m(s)|<1$, 
 \begin{equation}\label{eq:uniformnphi1}
 \Re e(s)\leq C_1  \log\frac1{|\Theta^{-1}(s)|}.
 \end{equation}
 Fix $w\in\CC_0$ and $\chi\in\TT^\infty$ with $0<\Re e(w)<c_0$. Define $G$ on $\DD$ by
 \begin{equation}\label{eq:uniformnphi2}
 G(z)=G_{w,\chi}(z):=\frac{\varphi_\chi(\Theta(z))-w}{\varphi_{\chi}(\Theta(z))+\bar w}
 \end{equation}
 which is a self-map of $\DD$. The Littlewood inequality for the standard Nevanlinna counting function of $G$
 says that 
 \begin{equation}\label{eq:uniformnphi3}
  \sum_{z\in G^{-1}(\{0\})} \log\frac 1{|z|}\leq \log\frac 1{|G(0)|}.
 \end{equation}
 Now $G(z)=0$ if and only if $\varphi_\chi(\Theta(z))=w$ so that \eqref{eq:uniformnphi3} becomes 
 $$\sum_{\Theta(z)\in\varphi_\chi^{-1}(\{w\})}\log \frac1{|z|}\leq \log\left|\frac{\varphi_\chi(\Theta(0)+\bar w}{\varphi_\chi(\Theta(0))-w}\right|$$
 which itself can be rewritten
 $$\sum_{s\in\varphi_\chi^{-1}(\{w\})}\log\frac1{|\Theta^{-1}(s)|}\leq \log \left|\frac{\varphi_\chi(2)+\bar w}{\varphi_\chi(2)-w}\right|.$$
Observe now that, when $\varphi_\chi(s)=w$, then $0<\Re e(s)<1$ so that, using \eqref{eq:uniformnphi1},
\begin{align*}
 N_{\varphi_\chi}(w)&=\sum_{\substack{\varphi_\chi(s)=w\\ |\Im m(s)|<1,\ \Re e(s)<1}}\Re e(s)\\
 &\leq C_1\sum_{s\in\varphi_\chi^{-1}(\{w\})}\log\frac1{|\Theta^{-1}(s)|}\\
 &\leq C_1 \log\left |\frac{\varphi_\chi(2)+\bar w}{\varphi_\chi(2)-w}\right|.
\end{align*}
Now we apply \cite[Lemma 2.3]{BP21} to get 
\begin{align*}
 N_{\varphi_\chi}(w)&\leq 2C_1 \frac{\Re e (\varphi_\chi(2))\Re e(w)}{|w-\varphi_\chi(2)|^2}\\
 &\leq 2C_1 \frac{\Re e(\varphi_\chi(2))\Re e(w)}{(\Re e (\varphi_\chi(2))-\Re e(w))^2+(\Im m(\varphi_\chi(2))-\Im m(w))^2}.
\end{align*}
Our restriction on $\Re e(w)$ shows that $\Re e(\varphi_\chi(2))-\Re e(w)\geq c_0$. On the other hand, since $\psi$ 
is a Dirichlet series uniformly convergent in each $\CC_\veps$, $\veps>0$, $\psi(2+i\mathbb R)$ is bounded. 
$\psi_\chi$ being a vertical limit function of $\psi$, there exists $C_2>0$ such that $|\varphi_\chi(2)|\leq C_2$
for all $\chi\in\TT^\infty$. If $|\Im m(w)|\leq 2C_2$, then we write
$$N_{\varphi_\chi}(w)\leq \frac{2C_1C_2\Re e(w)}{c_0^2}\leq \frac{2C_1C_2}{c_0^2}(1+4C_2^2)\frac{\Re e(w)}{1+(\Im m(w))^2}$$
whereas, if $|\Im m(w)|>2C_2$, then $\Im m(w)-\Im m(\varphi_\chi(2))\geq \frac 12\Im m(w)$ which yields
$$N_{\varphi_\chi}(w)\leq 2C_1C_2 \frac{\Re e(w)}{c_0+\frac 14(\Im m(w))^2}.$$
Therefore, in all cases, Proposition \ref{prop:uniformnphi} is proved.
\end{proof}

Proposition \ref{prop:uniformnphi} will be used to give a uniform bound on $N_{\varphi_\chi}$, in the spirit of (NC2),
upon the assumption $\Nphi(w)=o(\Re e(w))$.
\begin{corollary}\label{cor:ncuniform}
 Let $\varphi\in\Gu$ and $\delta\in(0,1)$. Let us assume that $\Nphi(w)=o(\Re e (w))$ as $\Re e(w)$ goes to $0$.
 Then for all $\veps>0$ there exists $\theta>0$ such that, for all $\chi\in\TT^\infty$, for all $w\in\CC_0$ with $\Re e(w)<\theta$,
 \begin{equation}\label{eq:ncuniform}
   N_{\varphi_\chi}(w)\leq \veps \frac{\Re e(w)}{\big(1+(\Im m(w))^2\big)^{(1+\delta)/2}}.
 \end{equation}
\end{corollary}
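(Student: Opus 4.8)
The plan is to interpolate between the two available bounds on $N_{\varphi_\chi}$. Proposition~\ref{prop:uniformnphi} controls $N_{\varphi_\chi}(w)$ by $C\,\Re e(w)/(1+(\Im m(w))^2)$ uniformly in $\chi$ as soon as $\Re e(w)<c_0$, which is far more than we need once $|\Im m(w)|$ is large; on the other hand {\bf (NC2)}, combined with the trivial observation $N_{\varphi_\chi}(w)\le\mathcal N_{\varphi_\chi}(w)$ (the restricted counting function is obtained from $\mathcal N_{\varphi_\chi}$ by deleting some non-negative terms $\Re e(s)$), gives $N_{\varphi_\chi}(w)\le\eta\,\Re e(w)$ uniformly in $\chi$ once $\Re e(w)$ is small enough, which is what is needed on the range where $|\Im m(w)|$ stays bounded. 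The only real point is to glue these two regimes with a single threshold $\theta$.

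So fix $\veps>0$ and let $C$ be the constant of Proposition~\ref{prop:uniformnphi}. Since $\delta<1$ we have $(\delta-1)/2<0$, hence we may choose $M\ge 1$ with $C(1+M^2)^{(\delta-1)/2}\le\veps$, and this choice of $M$ is made \emph{first}. For every $\chi\in\TT^\infty$ and every $w$ with $\Re e(w)<c_0$ and $|\Im m(w)|\ge M$, Proposition~\ref{prop:uniformnphi} then yields
\begin{align*}
 N_{\varphi_\chi}(w)&\le C\frac{\Re e(w)}{1+(\Im m(w))^2}
 =\frac{C}{\big(1+(\Im m(w))^2\big)^{(1-\delta)/2}}\cdot\frac{\Re e(w)}{\big(1+(\Im m(w))^2\big)^{(1+\delta)/2}}\\
 &\le \veps\,\frac{\Re e(w)}{\big(1+(\Im m(w))^2\big)^{(1+\delta)/2}},
\end{align*}
where the last inequality uses $\big(1+(\Im m(w))^2\big)^{(1-\delta)/2}\ge (1+M^2)^{(1-\delta)/2}$. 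This is \eqref{eq:ncuniform} on the range $|\Im m(w)|\ge M$, with no restriction beyond $\Re e(w)<c_0$.

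It remains to handle $|\Im m(w)|<M$, where $\big(1+(\Im m(w))^2\big)^{(1+\delta)/2}<(1+M^2)^{(1+\delta)/2}$. Here I would apply {\bf (NC2)} with $\eta=\veps(1+M^2)^{-(1+\delta)/2}$: there is $\theta_1>0$ so that $\mathcal N_{\varphi_\chi}(w)\le\eta\,\Re e(w)$ for all $\chi\in\TT^\infty$ whenever $\Re e(w)<\theta_1$. For such a $w$ with additionally $|\Im m(w)|<M$ we get
$$N_{\varphi_\chi}(w)\le\mathcal N_{\varphi_\chi}(w)\le\eta\,\Re e(w)=\frac{\veps\,\Re e(w)}{(1+M^2)^{(1+\delta)/2}}\le\frac{\veps\,\Re e(w)}{\big(1+(\Im m(w))^2\big)^{(1+\delta)/2}}.$$
Taking $\theta=\min(\theta_1,c_0)$ then combines the two cases and proves \eqref{eq:ncuniform}. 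I do not expect any serious obstacle: the argument is a soft interpolation, and the only thing to be careful about is the order of the choices — $M$ (and hence the value $\eta$ fed into {\bf (NC2)}) must be fixed before $\theta$ is extracted, so that one $\theta$ works simultaneously for all $\chi$ and all $w$ with $\Re e(w)<\theta$.
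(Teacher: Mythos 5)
Your argument is correct and is essentially the paper's proof written directly rather than by contradiction: the paper assumes the estimate fails along a sequence $(w_k)$ and splits into the cases $\Im m(w_k)$ unbounded (contradicting Proposition~\ref{prop:uniformnphi}) versus bounded (contradicting {\bf (NC2)}), which is exactly your dichotomy $|\Im m(w)|\ge M$ versus $|\Im m(w)|<M$ with the same two ingredients. Your version has the minor merit of making the threshold $M$ and the order of choices explicit, but it is not a different route.
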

\begin{proof}
 Assume that the result is false. Then we can find $\veps>0$, a sequence $(w_k)$ in $\CC_0$ with $\Re e(w_k)\to 0$ and a sequence $(\chi_k)$ in $\TT^\infty$
 such that, for all $k$, 
 $$\mathcal N_{\varphi_{\chi_k}}(w_k)\geq N_{\varphi_{\chi_k}}(w_k)>\veps \frac{\Re e(w_k)}{\big(1+(\Im m(w_k))^2\big)^{(1+\delta)/2}}.$$
 If $\Im m(w_k)$ is unbounded, this contradicts Proposition \ref{prop:uniformnphi} whereas if $\Im m(w_k)$ is bounded, this contradicts that $\mathcal N_\varphi(w_k)=o(\Re e(w_k))$
 and in particular (NC2).
 \end{proof}

\section{Compact composition operators}

\subsection{Compactness on Hardy spaces}

\begin{proof}[Proof of Theorem \ref{thm:main}]
 Let $(f_n)$ be a sequence in $\mathcal H^2$ that converges weakly to zero. We will let $f_{n,\chi}$ the vertical limit function of $f_n$ with respect
 to the character $\chi$. By the Littlewood-Paley formula applied with $d\mu(t)=\frac 12\mathbf 1_{[-1,1]}dt$, setting $s=\sigma+it$, 
 $$\|C_\varphi(f_n)\|^2=|f_n(+\infty)|^2+2\int_{\TT^\infty}\!\int_{\RR_+}\int_{-1}^1 |f_{n,\chi^{c_0}}'(\varphi_\chi(s))|^2 |\varphi_\chi'(s)|^2 \sigma dtd\sigma dm(\chi).$$
 Our assumption on $(f_n)$ implies that $(f_n(+\infty))$ tends to zero. In the inner-most integrals we do the non-univalent change of variables $w=u+iv=\varphi_\chi(\sigma+it)$.
 Observe that this change of variables involves the restricted Nevanlinna counting function $N_\varphi$ whereas in \cite{BAYILLI} we used
 $\mathcal N_\varphi$ (but we only obtained an inequality). Hence 
 $$\|C_\varphi(f_n)\|^2=|f_n(+\infty)|^2+2\int_{\TT^\infty}\!\int_{\RR_+}\int_{\mathbb R}|f_{n,\chi^{c_0}}'(w)|^2 N_{\varphi_\chi}(w)dvdudm(\chi).$$
 Now let $\veps>0$ and let $\theta>0$ be given by Corollary \ref{cor:ncuniform} for $\delta=1/2$. We split the integral over $\RR_+$
 into $\int_0^{\theta}+ \int_{\theta}^{+\infty}$. For the first integral, say $I_0$, we use \eqref{eq:ncuniform} to get
 \begin{align*}
  I_0&\leq  \veps\int_{\TT^\infty}\int_{\RR_+}\int_{\RR} |f_{n,\chi^{c_0}}'(w)|^2 u \frac{dv}{(1+v^2)^{3/4}}dudm(\chi)\\
  &\leq C_1 \veps \|f_n\|^2
 \end{align*}
where we have used the Littlewood-Paley equality with the finite measure $d\mu(v)=\frac{dv}{(1+v^2)^{3/4}}$. To hande the integral over $[\theta,+\infty[$,
say $I_2$, we now use Proposition \ref{prop:uniformnphi} to write
\begin{align*}
 I_1&\leq C\int_{\TT^\infty}\int_{\theta}^{+\infty}\int_{\RR} |f_{n,\chi^{c_0}}'(w)|^2 \frac{u}{1+v^2} dvdudm(\chi)\\
 &\leq C\int_{\theta}^{+\infty}\int_{\RR}\sum_n |a_{n,k}|^2(\log k)^2 k^{-2u}\frac{u}{1+v^2}dvdudm(\chi)\\
 &\leq C_2  \sum_{k\geq 1}|a_{n,k}|^2 (\log^2 k)\int_{\theta}^{+\infty} k^{-2u}du
\end{align*}
where we have written $f_n(s)=\sum_{k\geq 1}a_{n,k} k^{-s}$. Now the argument of \cite{BAYILLI} shows that 
$\limsup_{n\to+\infty}  \sum_{k\geq 1}|a_{n,k}|^2 (\log^2 k)\int_{\theta}^{+\infty} k^{-2u}du=0$,
which finishes the proof of Theorem \ref{thm:main}.
\end{proof}

\begin{remark}
 Using the Littlewood-Paley formula for $\mathcal H^p$ and the argument of \cite{BQS16}, a variant of the above proof
 shows that the condition $\mathcal N_\varphi(w)=o(\Re e(w))$ as $\Re e(w)$ tends to zero is also sufficient to prove
 the compactness of $C_\varphi$ on $\mathcal H^p$, $p\geq 1$.
\end{remark}

\begin{remark}
 Proposition \ref{prop:uniformnphi} provides also an alternative approach to the continuity of $C_\varphi$ when $\varphi\in\Gu$.
 Nevertheless, as it is written, we loose the fact that $C_\varphi$ is a contraction.
\end{remark}

\subsection{Compactness on Bergman spaces}

In this section, we turn to the Bergman spaces of Dirichlet series introduced by McCarthy in \cite{McCarthy04}. For $\alpha>-1$ define
$$\mathcal A_\alpha=\left\{f(s)=\sum_{n=1}^{+\infty}a_n n^{-s}:\ \|f\|_\alpha^2=\sum_{n=1}^{+\infty} \frac{|a_n|^2}{(1+\log n)^{1+\alpha}}<+\infty\right\}$$
(as in the case of the disc, the Hardy space $\mathcal H^2$ corresponds to the limiting case $\alpha=-1$). 
The norm of an element $f$ of $\mathcal A_\alpha$ can be evaluated thanks to the following Littlewood-Paley formula:
$$\|f\|_\alpha^2\asymp  |f(+\infty)|^2+\int_{\TT^\infty}\int_0^{+\infty}\int_{\RR} |f_\chi'(\sigma+it)|^2 \sigma^{2+\alpha}d\mu(t) d\sigma dm(\chi)$$
where $\mu$ is any finite positive measure on $\RR$. Moreover, if $f$ is a Dirichlet series which converges uniformly in each half-plane $\CC_\veps$, $\veps>0$, 
an easy adaptation of \cite[Lemma 2.2]{BP21} shows that 
\begin{equation}\label{eq:LPlimitbergman}
 \|f\|_\alpha^2\asymp |f(+\infty)|^2+\lim_{\sigma_0\to 0^+}\lim_{T\to+\infty}\frac 1T\int_{\sigma_0}^{+\infty}\int_{-T}^T |f'(\sigma+it)|^2\sigma^{2+\alpha}dtd\sigma.
\end{equation}

Compactness of composition operators on $\mathcal A_\alpha$ has been studied in \cite{BAYILLI} and in \cite{Bail15}. The interest of working in Bergman spaces
is that one often can replace a sufficient condition on some counting function by a sufficient condition on the symbol itself. This is what happens again here. 
\begin{theorem}\label{thm:bergman}
Let $\varphi\in\mathcal G$ and $\alpha>-1$. 
\begin{enumerate}[a)]
\item If $\varphi\in\mathcal G_{\geq 1}$ and $\frac{\Re e(\varphi(s))}{\Re e(s)}\to+\infty$
as $\Re e(s)\to 0$, then $C_\varphi$ is compact on $\mathcal A_\alpha$.
\item If $\varphi\in\mathcal G_0$ and $\frac{\Re e(\varphi(s))-1/2}{\Re e(s)}\to+\infty$
as $\Re e(s)\to 0$, then $C_\varphi$ is compact on $\mathcal A_\alpha$.
\end{enumerate}
\end{theorem}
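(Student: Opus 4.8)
The plan is to follow the proof of Theorem~\ref{thm:main}, replacing the linear weight $\sigma$ by the Bergman weight $\sigma^{2+\alpha}$ and the restricted counting function by its weighted version
$$N_{\varphi_\chi,\alpha}(w):=\sum_{\substack{\varphi_\chi(s)=w\\|\Im m(s)|<1}}(\Re e(s))^{2+\alpha}.$$
For item a): given $(f_n)$ in $\mathcal A_\alpha$ converging weakly to $0$, one has to show $\|C_\varphi f_n\|_\alpha\to0$. Applying the $\mathcal A_\alpha$-Littlewood-Paley formula with $d\mu(t)=\frac12\mathbf 1_{[-1,1]}dt$, using $(f\circ\varphi)_\chi=f_{\chi^{c_0}}\circ\varphi_\chi$, discarding $|f_n(+\infty)|^2$ (which tends to $0$ by weak convergence), and performing the non-univalent change of variables $w=u+iv=\varphi_\chi(\sigma+it)$ — in which the Jacobian $|\varphi_\chi'|^2$ is absorbed and precisely $N_{\varphi_\chi,\alpha}$ appears — one gets
$$\|C_\varphi f_n\|_\alpha^2\asymp |f_n(+\infty)|^2+\int_{\TT^\infty}\!\int_{\RR_+}\!\int_{\RR}|f_{n,\chi^{c_0}}'(w)|^2\,N_{\varphi_\chi,\alpha}(w)\,dv\,du\,dm(\chi).$$

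The core of the argument is a uniform smallness estimate for $N_{\varphi_\chi,\alpha}$. First, a Bergman analogue of Proposition~\ref{prop:uniformnphi}: we may assume $\psi\not\equiv0$ (else $\Re e(\varphi(s))/\Re e(s)\equiv c_0$ and the hypothesis fails), so $\psi(\CC_0)\subset\CC_0$, hence $\Re e(\psi_\chi)\ge0$ and any preimage satisfies $\Re e(s)\le\Re e(w)/c_0$; since $t\mapsto t^{1+\alpha}$ is nondecreasing ($\alpha>-1$), $N_{\varphi_\chi,\alpha}(w)\le(\Re e(w)/c_0)^{1+\alpha}N_{\varphi_\chi}(w)$, and Proposition~\ref{prop:uniformnphi} (which holds, with a uniform constant, for all $w\in\CC_0$, the preimages with $\Re e(s)\ge1$ being handled by a Rouch\'e argument) gives $N_{\varphi_\chi,\alpha}(w)\le C(\Re e(w))^{2+\alpha}/(1+(\Im m(w))^2)$ for all $\chi$ and all $w\in\CC_0$. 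Second — and here the situation is cleaner than for $\mathcal H^2$ — the hypothesis $\Re e(\varphi(s))/\Re e(s)\to+\infty$ is a pointwise inequality which transfers to \emph{every} vertical limit with the same threshold: since $\psi_\chi$ is a uniform limit on each $\overline{\CC_\veps}$ of vertical translates of $\psi$, passing to the limit in $\Re e(\psi(\cdot+i\tau))\ge(M-c_0)\Re e(\cdot)$ on $\{0<\Re e(s)<\eta_M\}$ yields $\Re e(\varphi_\chi(s))\ge M\Re e(s)$ there, uniformly in $\chi$, so no substitute for (NC2)/Corollary~\ref{cor:ncuniform} is needed. Hence if $\Re e(w)<c_0\eta_M$ every preimage of $w$ has $\Re e(s)\le\Re e(w)/M$, so $N_{\varphi_\chi,\alpha}(w)\le(C/M^{1+\alpha})(\Re e(w))^{2+\alpha}/(1+(\Im m(w))^2)$; as $1+\alpha>0$ and $M$ is arbitrary, for every $\veps>0$ there is $\theta>0$ such that $N_{\varphi_\chi,\alpha}(w)\le\veps(\Re e(w))^{2+\alpha}/(1+(\Im m(w))^2)$ whenever $\Re e(w)<\theta$ and $\chi\in\TT^\infty$.

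One then concludes as in Theorem~\ref{thm:main}. Split $\int_{\RR_+}=\int_0^\theta+\int_\theta^{+\infty}$. For the first piece, insert the last estimate and recognize the $\mathcal A_\alpha$-Littlewood-Paley formula with the finite measure $dv/(1+v^2)$ (using that $\chi\mapsto\chi^{c_0}$ preserves $m$); this bounds it by $C\veps\|f_n\|_\alpha^2$. For the tail, insert the global bound on $N_{\varphi_\chi,\alpha}$, integrate first in $\chi$ so that $\int_{\TT^\infty}|f_{n,\chi^{c_0}}'(u+iv)|^2\,dm(\chi)=\sum_{k\ge2}(\log k)^2|a_{n,k}|^2k^{-2u}$ is independent of $v$, and apply the argument of \cite{BAYILLI} to obtain $\limsup_n(\text{tail})=0$. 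Letting $\veps\to0$ proves a).

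Part b) runs along the same lines with $\CC_0$ replaced by $\CC_{1/2}$ and the characteristic by the shift by $1/2$: one uses $(f\circ\varphi)_\chi=f\circ\varphi_\chi$, changes variables $w=\varphi_\chi(s)\in\CC_{1/2}$, so that the relevant counting function is $\sum_{\varphi_\chi(s)=w,\ |\Im m(s)|<1}(\Re e(s))^{2+\alpha}$, and the hypothesis $(\Re e(\varphi(s))-1/2)/\Re e(s)\to+\infty$ again transfers to every vertical limit. The step I expect to be the main obstacle is that, for $\varphi\in\mathcal G_0$, a preimage $s$ of $w$ need not have small real part merely because $\Re e(w)$ is close to $1/2$: one first confines the preimages to a bounded range of $\Re e(s)$ — using that $\varphi$ is bounded on each $\CC_\veps$ and $\Re e(\varphi(+\infty))\ge1/2$ — and then separates, as in the tail estimate of a), the preimages with small real part (controlled by the transferred hypothesis) from the moderate ones (controlled directly, via the Dirichlet-series structure of $C_\varphi f_n$ and the weak convergence of $(f_n)$).
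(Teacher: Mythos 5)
Part a) of your proposal is correct and follows essentially the same route as the paper: the same weighted restricted counting function $N_{\alpha,\varphi_\chi}(w)=\sum_{\varphi_\chi(s)=w,\,|\Im m(s)|<1}(\Re e(s))^{2+\alpha}$, the same observation that the hypothesis $\Re e(\varphi(s))/\Re e(s)\to+\infty$ passes to every vertical limit with the same threshold (so that no analogue of (NC2) or of Corollary \ref{cor:ncuniform} is needed), the same factoring $(\Re e(s))^{2+\alpha}\le(\veps\Re e(w))^{1+\alpha}\Re e(s)$ combined with Proposition \ref{prop:uniformnphi}, and the same splitting of $\int_{\RR_+}$ at $\theta$. (The paper is equally silent about the fact that Proposition \ref{prop:uniformnphi} is only stated for $\Re e(w)<c_0$ while the tail integral also needs a bound for larger $\Re e(w)$, so your unproved ``Rouch\'e'' aside is not a point of divergence.)

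Part b) is where your proposal genuinely departs from the paper, and where it has a gap. You keep the character-average Littlewood--Paley formula and the restricted counting function $\sum_{\varphi_\chi(s)=w,\,|\Im m(s)|<1}(\Re e(s))^{2+\alpha}$, and you correctly identify the obstacle --- for $c_0=0$ a preimage of $w$ need not have small real part when $\Re e(w)-1/2$ is small --- but your proposed resolution is not an argument. ``Confining the preimages to a bounded range of $\Re e(s)$ using that $\varphi$ is bounded on each $\CC_\veps$'' does nothing near $w=\varphi(+\infty)$, where preimages of arbitrarily large real part occur and where any Littlewood-type bound degenerates; and ``controlling the moderate preimages directly via the Dirichlet-series structure and weak convergence'' is not a checkable step. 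Moreover you never state, let alone prove, the zero-characteristic analogue of Proposition \ref{prop:uniformnphi} that your change of variables would require. The paper sidesteps all of this by changing tools: it invokes the limit Littlewood--Paley formula \eqref{eq:LPlimitbergman} (legitimate since $\sigma_u(f\circ\varphi)\le 0$) and the Brevig--Perfekt-type mean counting function $\mathcal M_{\alpha,\varphi}(\sigma_0,T;w)=\frac 1T\sum_{\varphi(s)=w,\,|\Im m(s)|<T,\,\Re e(s)>\sigma_0}(\Re e(s))^{2+\alpha}$, for which Lemmas 2.3 and 2.4 of \cite{BP21} give the uniform-in-$T$ bound $\frac 1T\sum_{|\Im m(s)|<T}\Re e(s)\le C\,(\Re e(w)-\frac12)/|w-\varphi(+\infty)|^2$; it is precisely the $|w-\varphi(+\infty)|^{-2}$ decay, together with the choice of $\theta$ keeping the region $\{\Re e(w)<\frac12+\theta\}$ away from $\varphi(+\infty)$, that makes the splitting and the conclusion (modelled on \cite[Theorem 1.4]{BP21}) go through. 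Your part b) would need to be rebuilt on this, or an equivalent, foundation.
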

\begin{proof}
Let us start with a). Let $N_{\alpha,\varphi}=\sum_{\substack{\varphi(s)=w \\ |\Im m(s)|<1}}\big(\Re e(s)\big)^{2+\alpha}$ be the appropriated Nevanlinna counting function for $\mathcal A_\alpha$. We first observe that, for all $\chi\in\TT^\infty$ and for all $w\in\CC_0$, 
\begin{align*}
N_{\alpha,\varphi_\chi}(w)&\leq \left(\sum_{\substack{\varphi_\chi(s)=w \\ |\Im m(s)|<1}}\Re e(s)\right)^{2+\alpha}\\
&\leq C \frac{(\Re e(w))^{2+\alpha}}{1+(\Im m(w))^{2+\alpha}}
\end{align*}
for some $C>0$. Moreover, let $\veps>0$ and $\theta>0$ be such that $\Re e(s)<\theta\implies \Re e(s)<\veps \Re e(\varphi(s))$. For all $\chi\in\TT^\infty$, one gets by vertical limit $\Re e(s)\leq \veps \Re e(\varphi_\chi(s))$. Let $w\in\CC_0$ with $\Re e(w)<\theta$ and let $\chi\in\TT^\infty$. 
If $\varphi_{\chi}^{-1}(\{w\})=\varnothing$ then $N_{\alpha,\varphi_\chi}(w)=0$. Otherwise, any $s\in\varphi_{\chi}^{-1}(\{w\})$ satisfy $\Re e(s)<\theta$ so that 
\begin{align*}
N_{\alpha,\varphi_\chi}(w)&=\sum_{\substack{\varphi_\chi(s)=1\\ |\Im m(s)|<1}}\big(\Re e(s)\big)^{2+\alpha}\\
&\leq \veps^{\alpha+1}\big(\Re e(w)\big)^{\alpha+1}\sum_{\substack{\varphi_\chi(s)=1\\ |\Im m(s)|<1}}\Re e(s)\\
&\leq C\veps^{\alpha+1}\frac{\big(\Re e(w)\big)^{2+\alpha}}{1+(\Im m(w))^2}
\end{align*}
where we have used Proposition \ref{prop:uniformnphi}. We then conclude exactly as in the proof of Theorem \ref{thm:main}. Details are left to the reader. 

Let us turn to b). We are inspired by \cite{BP21} but working in a Bergman space and using this very strong assumption we will avoid most of the technical
difficulties which appear here. First we observe that for any $f\in\mathcal A_\alpha$, $\sigma_u(f\circ \varphi)\leq 0$, which implies that we can estimate the norm
of $f\circ\varphi$ using \eqref{eq:LPlimitbergman}. We then consider, for $\sigma_0,T>0$ and $w\in\CC_0$, the counting function 
$$\mathcal M_{\alpha,\varphi}(\sigma_0,T;w)=\frac 1T\sum_{\substack{\varphi(s)=w\\ |\Im m(s)|<T,\ \Re e(s)>\sigma_0}}\big(\Re e(s)\big)^{2+\alpha}.$$
The nonunivalent change of variables $w=\varphi_\chi(s)$ leads to 
$$\|C_\varphi f\|_\alpha^2\asymp |f(\varphi(+\infty))|^2+\lim_{\sigma_0\to 0^+}\lim_{T\to+\infty}\int_{\CC_{1/2}}|f'(w)|^2 \mathcal M_{\alpha,\varphi}(\sigma_0,T;w)d\sigma dt.$$
Let $\veps>0$ and $0<\theta<\Re e(\varphi(+\infty))/2$ such that $\Re e(\varphi(s))<\frac12+\theta$ implies $\Re e(s)<\veps\left(\Re e(\varphi(s))-\frac 12\right)$. Then 
\begin{align*}
 \mathcal M_{\alpha,\varphi}(\sigma_0,T;w)&\leq \frac 1T \sum_{\substack{\varphi(s)=w\\| \Im m(s)|<T}} \big(\Re e(s)\big)^{2+\alpha}\\
 &\leq \veps^{1+\alpha}\left(\Re e(w)-\frac12\right)^{1+\alpha}\frac 1T\sum_{\substack{\varphi(s)=w\\ | \Im m(s)|<T}} \Re e(s)\\
 &\leq C \frac{\veps^{1+\alpha}  \left(\Re e(w)-\frac 12\right)^{2+\alpha}}{|w-\varphi(+\infty)|^2}
\end{align*}
by \cite[Lemma 2.3 and 2.4]{BP21}
where the constant $C$ is uniform in $T$ for all $T\geq\sigma_1$, for some $\sigma_1>0$. 
The compactness of $C_\varphi$ now follows from an argument similar to that of \cite[Theorem 1.4]{BP21}, using that the estimate on $\mathcal M_{\alpha,\varphi}(\sigma_0,T;w)$ 
for $\Re e(w)<\theta$ is uniform with respect to $\sigma_0$ and $T$.
\end{proof}

\begin{corollary}
 Let $\varphi\in\mathcal G$ and $\alpha>-1$. 
 \begin{enumerate}
  \item If $\varphi\in\mathcal G_{\geq 1}$ and $\varphi$ is supported on a finite set of prime numbers, then 
  $$C_\varphi\textrm{ is compact on }\mathcal A_\alpha \iff \frac{\Re e(\varphi(s))}{\Re e(s)}\xrightarrow{\Re e(s)\to 0}+\infty.$$
  \item If $\varphi\in\mathcal G_{0}$ and $\varphi$ is supported on a single prime number, then 
  $$C_\varphi\textrm{ is compact on }\mathcal A_\alpha \iff \frac{\Re e(\varphi(s))-1/2}{\Re e(s)}\xrightarrow{\Re e(s)\to 0}+\infty.$$
 \end{enumerate}
\end{corollary}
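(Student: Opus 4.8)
The implication $\Longleftarrow$ in both items is exactly Theorem~\ref{thm:bergman}, a) and b); so the real content is the necessity of the symbol condition for the compactness of $C_\varphi$, which I treat by contraposition and separately for the two items.

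For item 1, write $\varphi=c_0s+\psi\in\Gu$ with $\psi$ supported on $\mathcal Q=\{p_1,\dots,p_d\}$, so that $\psi(s)=\Psi(p_1^{-s},\dots,p_d^{-s})$ for some holomorphic $\Psi:\DD^d\to\overline{\CC_0}$. The plan is to transfer the question to a composition operator in $d$ variables. Let $\mathcal A_{\alpha,\mathcal Q}$ be the closed subspace of $\mathcal A_\alpha$ of the Dirichlet series supported on $\mathcal Q$. Since $p_j^{-\varphi(s)}=(p_j^{-s})^{c_0}e^{-(\log p_j)\Psi(p_1^{-s},\dots,p_d^{-s})}$ and each $z\mapsto e^{-(\log p_j)\Psi(z)}$ is bounded and holomorphic on $\DD^d$, the series $f\circ\varphi$ is supported on $\mathcal Q$ whenever $f$ is; hence $\mathcal A_{\alpha,\mathcal Q}$ is $C_\varphi$-invariant, and its restriction there is compact as soon as $C_\varphi$ is compact on $\mathcal A_\alpha$. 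Via the isometry $\mathcal A_{\alpha,\mathcal Q}\simeq X_\alpha$, $X_\alpha=\big\{F(z)=\sum_{\nu\in\mathbb N_0^d}a_\nu z^\nu:\ \sum_\nu|a_\nu|^2(1+\log p^\nu)^{-(1+\alpha)}<+\infty\big\}$ (with $p^\nu=\prod_jp_j^{\nu_j}$), this restriction becomes $C_\Phi$, $\Phi_j(z)=z_j^{c_0}e^{-(\log p_j)\Psi(z)}$ a self-map of $\DD^d$.

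Then I would exploit that $X_\alpha$ is a reproducing kernel Hilbert space, with kernel $K_\zeta(z)=\sum_\nu(1+\log p^\nu)^{1+\alpha}\overline{\zeta^\nu}\,z^\nu$: for $\zeta=(p_1^{-s},\dots,p_d^{-s})$ one gets $\|K_\zeta\|^2=\sum_{n\in\langle\mathcal Q\rangle}(1+\log n)^{1+\alpha}n^{-2\Re e(s)}$ ($\langle\mathcal Q\rangle$ being the set of integers whose prime factors lie in $\mathcal Q$), and an Abel summation based on $\#\{n\in\langle\mathcal Q\rangle:\ n\leq x\}\asymp(\log x)^d$ gives $\|K_\zeta\|^2\asymp(\Re e(s))^{-(d+1+\alpha)}$ as $\Re e(s)\to 0$, and $\|K_\zeta\|\asymp 1$ when $\Re e(s)$ stays in a compact subset of $(0,+\infty)$. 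Moreover $C_\Phi^*K_\zeta=K_{\Phi(\zeta)}$, and $K_{\zeta_k}/\|K_{\zeta_k}\|\to 0$ weakly in $X_\alpha$ when $\Re e(s_k)\to 0$ (test against the Dirichlet polynomials supported on $\mathcal Q$, dense in $X_\alpha$ and bounded on $\overline{\DD^d}$, and use $\|K_{\zeta_k}\|\to+\infty$). Now if the symbol condition fails, there are $M>0$ and $(s_k)$ with $\Re e(s_k)\to 0$ and $\Re e(\varphi(s_k))\leq M\Re e(s_k)$; with $\zeta_k=(p_1^{-s_k},\dots,p_d^{-s_k})$ one has $\Phi(\zeta_k)=(p_1^{-\varphi(s_k)},\dots,p_d^{-\varphi(s_k)})$, so compactness of $C_\Phi^*$ would force $\|K_{\Phi(\zeta_k)}\|/\|K_{\zeta_k}\|\to 0$, whereas the asymptotics give $\|K_{\Phi(\zeta_k)}\|^2/\|K_{\zeta_k}\|^2\asymp(\Re e(s_k)/\Re e(\varphi(s_k)))^{d+1+\alpha}\geq M^{-(d+1+\alpha)}$, a contradiction. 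This never uses that $\varphi$ is finitely valent, which is how it improves on \cite{Bail15}.

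For item 2, $c_0=0$ and $\varphi(s)=\Phi_0(p^{-s})$ with $\Phi_0:\DD\to\CC_{1/2}$ holomorphic; the reduction above is vacuous since $\Re e(\varphi)>1/2$, whence $|\Phi(z)|\leq p^{-1/2}$ on $\DD$, $\Phi$ has relatively compact range, and $C_\varphi|_{\mathcal A_{\alpha,\{p\}}}$ is always compact. Here I would combine Theorem~\ref{thm:bergman}\,b) with the converse in the $\mathcal G_0$ theory: by the Bergman version of \cite{BP21}, compactness of $C_\varphi$ forces the associated Nevanlinna counting function $N_{\alpha,\varphi}(w)$ to be a $o\big((\Re e(w)-1/2)^{2+\alpha}\big)$ as $\Re e(w)\to1/2$; if the symbol condition fails one picks $(s_k)$ with $\Re e(s_k)\to0$ and $\Re e(\varphi(s_k))-1/2\leq M\Re e(s_k)$, and—using the $2\pi/\log p$-periodicity of $\varphi$ to move $s_k$ by a period so that it contributes to $N_{\alpha,\varphi}(w_k)$, $w_k=\varphi(s_k)$—gets $N_{\alpha,\varphi}(w_k)\geq(\Re e(s_k))^{2+\alpha}\geq M^{-(2+\alpha)}(\Re e(w_k)-1/2)^{2+\alpha}$ with $\Re e(w_k)-1/2\to0$, contradicting the $o$-estimate. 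The main obstacle is in item 1: carrying out the polydisc reduction rigorously and, above all, proving the two-sided kernel estimate $\|K_\zeta\|^2\asymp(\Re e(s))^{-(d+1+\alpha)}$ with constants independent of $s$; in item 2 the only subtle point is to match $N_{\alpha,\varphi}$ with the precise counting function of \cite{BP21} once periodicity is accounted for.
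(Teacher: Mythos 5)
The paper states this corollary without any proof: the right-to-left implications are Theorem~\ref{thm:bergman}, and the converses are meant to be quoted from the literature (essentially \cite{Bail15} and the single-prime theory). So you are in effect reconstructing those necessity results, and your two items fare very differently. Your item~(1) is correct and is essentially the standard argument: the Bohr lift of the invariant subspace $\mathcal A_{\alpha,\mathcal Q}$, the identification of the restricted operator with $C_\Phi$ on the weighted space $X_\alpha$ over $\DD^d$, the identity $C_\Phi^*K_\zeta=K_{\Phi(\zeta)}$ and the weak nullity of the normalized kernels are all sound, and you rightly note that $\Re e(\varphi(s_k))\le M\Re e(s_k)\to 0$ is what lets you apply the kernel asymptotics at $\Phi(\zeta_k)$. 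The two-sided estimate $\|K_\zeta\|^2\asymp(\Re e(s))^{-(d+1+\alpha)}$, which you single out as the main obstacle, is in fact routine: it is the comparison of $\sum_{\nu\in\mathbb N_0^d}(1+\sum_j\nu_j\log p_j)^{1+\alpha}e^{-2\sigma\sum_j\nu_j\log p_j}$ with $\int_0^\infty(1+t)^{1+\alpha}t^{d-1}e^{-2\sigma t}\,dt$.

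The genuine gap is in item~(2). Everything there rests on ``the Bergman version of \cite{BP21}'', i.e.\ the assertion that compactness of $C_\varphi$ on $\mathcal A_\alpha$ forces the weighted mean counting function to be $o\big((\Re e(w)-1/2)^{2+\alpha}\big)$. That is the necessity half of the Brevig--Perfekt theorem, which is the hard half of that paper: it needs a local embedding of the Dirichlet series space into a Bergman space of $\CC_{1/2}$, a submean value property for the mean counting function, and a delicate test-function construction. None of this is available off the shelf for $\mathcal A_\alpha$, and it cannot be replaced by the naive reproducing-kernel argument in the full space: for a single-prime $\mathcal G_0$ symbol one has $\varphi(a)=\Phi_0(p^{-a})$ with $|p^{-a}|\le p^{-1/2}$ when $\Re e(a)\ge 1/2$, so $\varphi(a)$ stays in a compact subset of $\CC_{1/2}$, $\|K_{\varphi(a)}\|/\|K_a\|\to 0$ automatically, and the kernel test detects nothing; likewise your own observation that $C_\varphi$ restricted to $\mathcal A_{\alpha,\{p\}}$ is always compact shows that the subspace reduction of item~(1) is vacuous here. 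In short, you have correctly located where the difficulty sits (the boundary $\Re e(s)=0$) and the periodicity step matching a preimage $s_k$ to a lower bound for the counting function is fine, but you have only reduced item~(2) to an unproved theorem of comparable depth, so the necessity in item~(2) is not established by your proposal.
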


\begin{remark}
 For the case of positive characteristic, we do not know whether the condition $\Re e(\varphi(s))/\Re e(s)\to+\infty$ is always necessary for compactness.
\end{remark}

\subsection{Concluding remarks}

\begin{question}
 On $\mathcal H^2$, can we get a necessary condition using some counting function without any extra assumption on $\varphi\in\mathcal G_{\geq 1}$?
 Or at least, if $\varphi\in\mathcal G_{\geq 1}$ is supported on a finite set of prime numbers, do we have
 $$C_\varphi\textrm{ is compact on }\mathcal H^2\iff \mathcal N_\varphi(w)=o\big(\Re e(w)\big)\textrm{ as }\Re e(w)\to 0?$$
\end{question}

\begin{remark}
 The condition $ \frac{\Re e(\varphi(s))-1/2}{\Re e(s)}\xrightarrow{\Re e(s)\to 0}+\infty$ is not necessary for compactness on $\mathcal A_\alpha$
 when the symbol is not supported on a single prime number. For instance, $\varphi(s)=\frac 52-2^{-s}-3^{-s}$ generates a compact operator
 on $\mathcal A_\alpha$ (the proof of \cite[Theorem 2]{BB16}, done in $\mathcal H^2$, can be adapted to $\mathcal A_\alpha$) 
 which does not satisfy the above condition.
\end{remark}

\begin{remark}
 We can also use the restricted Nevanlinna counting function introduced here to simplify the results of \cite{BWY22}, deleting an unnecessary assumption of boundedness.
 For instance, we can replace Theorem 5.5 of \cite{BWY22} by: let $\varphi_0$ and $\varphi_1\in\mathcal G_{\geq 1}$ and
write them $\varphi_0=c_0s+\psi_0$, $\varphi_1=c_0s+\psi_1$. Assume moreover that there exists $C>0$ such that
\begin{itemize}
\item $|\varphi_0-\varphi_1|\leq C\min(\A \varphi_0,\A \varphi_1)$;
\item $|\varphi_0'-\varphi_1'|\leq C$.
\end{itemize}
Then $C_{\varphi_0}$ and $C_{\varphi_1}$ belong to the same component of $\mathcal C(\mathcal H)$, the set of composition operators on $\mathcal H$. If moreover we assume that
\[ |\varphi_0-\varphi_1|=o\big(\min(\A\varphi_0,\A \varphi_1)\big)\textrm{ and }\varphi_0'-\varphi_1'\to 0\textrm{ as }\min(\A\varphi_0,\A\varphi_1)\to 0 \]
then $C_{\varphi_0}-C_{\varphi_1}$ is compact. 
\end{remark}

\providecommand{\bysame}{\leavevmode\hbox to3em{\hrulefill}\thinspace}
\providecommand{\MR}{\relax\ifhmode\unskip\space\fi MR }
\providecommand{\MRhref}[2]{%
  \href{http://www.ams.org/mathscinet-getitem?mr=#1}{#2}
}
\providecommand{\href}[2]{#2}

\end{document}